\documentclass[12pt,a4paper]{amsart}
\usepackage[latin1]{inputenc}
\usepackage{latexsym}
\usepackage{color,graphicx,shortvrb}
\usepackage{amsmath, amssymb}
\usepackage{amsfonts}
\usepackage[colorlinks, bookmarks=true]{hyperref}

\newtheorem{theorem}{Theorem}[section]
\newtheorem{lemma}[theorem]{Lemma}

\newtheorem{corollary}[theorem]{Corollary}

\theoremstyle{definition}
\newtheorem{definition}[theorem]{Definition}



\setlength{\textwidth}{16cm}       
\setlength{\oddsidemargin}{0.25cm}   
\setlength{\evensidemargin}{0.25cm}  
\setlength{\topmargin}{1.2cm}     
\linespread{1.5}

\author{J. M. Almira, Kh. F. Abu-Helaiel}
\title{A  $p$-adic Montel theorem and locally polynomial functions}

\begin{document}
\keywords{}


\subjclass[2010]{}


\begin{abstract} 
We prove a version of both Jacobi's and Montel's Theorems for the case of continuous functions defined over the field $\mathbb{Q}_p$ of $p$-adic numbers. In particular, we prove that, if 
\[
\Delta_{h_0}^{m+1}f(x)=0 \ \ \text{ for all } x\in\mathbb{Q}_p,
\]
and $|h_0|_p=p^{-N_0}$ then,  for all $x_0\in \mathbb{Q}_p$, the restriction of $f$  over the set $x_0+p^{N_0}\mathbb{Z}_p$  coincides with a  polynomial $p_{x_0}(x)=a_0(x_0)+a_1(x_0)x+\cdots+a_m(x_0)x^m$. Motivated by this result, we compute the general solution of the functional equation with restrictions given by 
\begin{equation*}
\Delta_h^{m+1}f(x)=0 \ \ (x\in X \text{ and } h\in B_X(r)=\{x\in X:\|x\|\leq r\}),
\end{equation*}
whenever $f:X\to Y$, $X$ is an ultrametric normed  space over a non-Archimedean valued field $(\mathbb{K},|\cdot|)$ of characteristic zero, and $Y$ is a $\mathbb{Q}$-vector space. By obvious reasons, we call these functions \textit{uniformly locally polynomial}. 

 \end{abstract}

\maketitle

\markboth{J. M. Almira, Kh. F. Abu-Helaiel}{ $p$-adic Montel Theorem and locally polynomial functions}

\section{Motivation}
Given a commutative group $(G,+)$, a nonempty set $Y$, and a function $f:G\to Y$, we consider the set of periods of $f$, $\mathfrak{P}_0(f)=\{g\in G:f(w+g)=f(w)\text{ for all } w\in G\}$. Obviously, $\mathfrak{P}_0(f)$ is always a subgroup of $G$ and, in some special cases, these groups are well known and, indeed, have a nice structure.  For example, a famous result proved by Jacobi in 1834 claims that  if $f:\mathbb{C}\to\widehat{\mathbb{C}}$ is a non constant meromorphic function defined on the complex numbers, then  $\mathfrak{P}_0(f)$ is a discrete subgroup of $(\mathbb{C},+)$. This reduces the possibilities to the following three cases: $\mathfrak{P}_0(f)=\{0\}$, or $\mathfrak{P}_0(f)=\{nw_1:n\in\mathbb{Z}\}$ for a certain complex number $w_1\neq 0$, or $\mathfrak{P}_0(f)=\{n_1w_1+n_2w_2:(n_1,n_2)\in \mathbb{Z}^2\}$ for certain complex numbers $w_1,w_2$ satisfying $w_1w_2\neq 0$ and $w_1/w_2\not\in\mathbb{R}$. In particular, these functions cannot have three independent periods and there exist functions  $f:\mathbb{C}\to\mathbb{C}$  with two independent periods $w_1,w_2$ as soon as $w_1/w_2\not\in\mathbb{R}$. These functions are called doubly periodic (or elliptic) and have an important role in complex function theory \cite{JS}.  Analogously, if the function $f:\mathbb{R}\to\mathbb{R}$ is continuous and non constant, it does not admit  two $\mathbb{Q}$-linearly independent periods. 

These results can be formulated in terms of functional equations since $h$ is a period of $f:G\to Y$ if and only if $f$ solves the functional equation
$\Delta_hf(x)=0 \ \ (x\in G).$ Thus, Jacobi's theorem can be formulated as a result which characterizes the constant functions as those meromorphic  functions $f:\mathbb{C}\to\widehat{\mathbb{C}}$  which solve a system of functional equations of the form 
\begin{equation}\label{JC}
\Delta_{h_1}f(z)=\Delta_{h_2}f(z)=\Delta_{h_3}f(z)=0 \ \ (z\in \mathbb{C})
\end{equation}
for three independent periods $\{h_1,h_2,h_3\}$ (i.e., $h_3\not\in h_1\mathbb{Z}+h_2\mathbb{Z}$). For the real case, the result states that, if $\dim_{\mathbb{Q}}\mathbf{span}_{\mathbb{Q}}\{h_1,h_2\}=2$, the continuous function $f:\mathbb{R}\to\mathbb{R}$ is a constant function if and only if it solves the system of functional equations 
\begin{equation} \label{JR}
\Delta_{h_1}f(x)=\Delta_{h_2}f(x)= 0\ (x\in \mathbb{R}).
\end{equation}
In 1937  Montel \cite{montel} proved an interesting nontrivial generalization of Jacobi's theorem. Concretely, he substituted in the equations $(\ref{JC}),(\ref{JR})$ above the first difference operator $\Delta_h$ by  the higher differences operator $\Delta^{m+1}_h$ (which is inductively defined by $\Delta_h^{n+1}f(x)=\Delta_h(\Delta_h^nf)(x)$, $n=1,2,\cdots$) and proved that these equations are appropriate for the characterization of ordinary polynomials. Concretely, he proved the following result:
\begin{theorem}[Montel] Assume that $f:\mathbb{C}\to\mathbb{C}$ is an analytic function  which solves a system of functional equations of the form 
\begin{equation}\label{JC}
\Delta_{h_1}^{m+1}f(z)=\Delta_{h_2}^{m+1}f(z)=\Delta_{h_3}^{m+1}f(z)=0 \ \ (z\in \mathbb{C})
\end{equation}
for three independent periods $\{h_1,h_2,h_3\}$. Then $f(z)=a_0+a_1z+\cdots+a_mz^m$ is an ordinary polynomial with complex coefficients and degree $\leq m$. Furthermore, if $\{h_1,h_2\}\subset \mathbb{R}$ satisfy 
$\dim_{\mathbb{Q}}\mathbf{span}_{\mathbb{Q}}\{h_1,h_2\}=2$, the continuous function $f:\mathbb{R}\to\mathbb{R}$ is an ordinary polynomial with real coefficients and degree $\leq m$ (i.e., $f(x)=a_0+a_1x+\cdots+a_mx^m$) if and only if it solves the system of functional equations 
\begin{equation} \label{JR}
\Delta_{h_1}^{m+1}f(x)=\Delta_{h_2}^{m+1}f(x)= 0\ (x\in \mathbb{R}).
\end{equation}
\end{theorem}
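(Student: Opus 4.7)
The ``polynomial $\Rightarrow$ system'' direction is immediate, since each $\Delta_{h}$ lowers polynomial degree by one and therefore $\Delta_{h}^{m+1}$ annihilates every polynomial of degree $\leq m$ for any $h$. For the converse I would argue by induction on $m$, taking the base case $m=0$ directly from the Jacobi theorems recalled in the introduction: a continuous $f\colon\R\to\R$ with two $\mathbb{Q}$-independent periods, or an entire $f\colon\C\to\C$ with three $\mathbb{Z}$-independent periods, must be constant.

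The engine of the inductive step is the commutativity of all operators $\Delta_{h_{j}}$, which follows from their translation-invariance. In the real case I would form $g := \Delta_{h_{1}}^{m}\Delta_{h_{2}}^{m}f$ and compute
\[
\Delta_{h_{i}}g \;=\; \Delta_{h_{3-i}}^{m}\,\Delta_{h_{i}}^{m+1}f \;=\; 0 \qquad (i=1,2),
\]
so $g$ is continuous with two $\mathbb{Q}$-independent periods and hence constant by the base case; the complex case uses $g := \Delta_{h_{1}}^{m}\Delta_{h_{2}}^{m}\Delta_{h_{3}}^{m}f$ to produce an entire triply periodic function, again constant. A downward induction on the total order $k+\ell$ of the mixed differences $u_{k,\ell}:=\Delta_{h_{1}}^{k}\Delta_{h_{2}}^{\ell}f$ would then show $u_{k,\ell}\equiv 0$ whenever $k+\ell\geq m+1$: at each level the vanishing of the higher-order successors makes $u_{k,\ell}$ both $h_{1}$- and $h_{2}$-periodic and hence constant, while continuity of a neighbour $u_{k-1,\ell}$ (satisfying $u_{k-1,\ell}(x+h_{i})=u_{k-1,\ell}(x)+C_{i}$) imposes a chain of relations $C_{k,\ell}h_{2}=C_{k-1,\ell+1}h_{1}$ along the diagonal that, together with the boundary vanishing $u_{m+1,\ast}=0$, forces every constant at that level to be zero. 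With all mixed differences of total order $\geq m+1$ vanishing, the identity $\Delta_{a+b}=\Delta_{a}+\Delta_{b}+\Delta_{a}\Delta_{b}$ expanded to the $(m+1)$-st power yields $\Delta_{ah_{1}+bh_{2}}^{m+1}f=0$ for every pair of integers $a,b$; density of $h_{1}\Z+h_{2}\Z$ in $\R$ combined with continuity in $h$ extends this to every $h\in\R$, whereupon Fr\'echet's classical theorem identifies $f$ as a polynomial of degree $\leq m$. The complex case proceeds identically, with analyticity replacing continuity at the final step.

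The step I expect to be most delicate is the descent that forces each constant appearing at the top of the grid to actually vanish. In the real case this is a transparent boundedness-versus-unboundedness clash between the periodicity of the neighbour $u_{k-1,\ell}$ and its pseudo-periodic defect; in the entire setting boundedness is no longer available, and one must instead work with the Laurent expansion $\phi(z)=\sum_{n\in\Z}a_{n}e^{2\pi inz/h_{1}}$ of an entire $h_{1}$-periodic function and exploit the $\mathbb{Z}$-independence of the periods to annihilate every nonzero Fourier mode, paying attention to convergence on $\C^{\ast}$.
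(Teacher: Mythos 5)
The paper never proves this statement: it is quoted as Montel's 1937 theorem, with the citation \cite{montel}, purely as motivation for the $p$-adic analogue developed in Section 2, so there is no in-paper proof to compare yours with and I assess the argument on its own. The real half is, in my judgement, correct and is the standard argument. The grid $u_{k,\ell}=\Delta_{h_1}^{k}\Delta_{h_2}^{\ell}f$, the downward induction (every antidiagonal $k+\ell=s$ with $s\geq m+1$ meets the boundary $\{k\geq m+1\}\cup\{\ell\geq m+1\}$, which is what lets the chain $C_{k-1,\ell+1}h_1=C_{k,\ell}h_2$ terminate at a known zero), the clash between boundedness of a continuous periodic function and the linear drift $v(x+nh_1)=v(x)+nC$, and the finish via $\Delta_{ah}=(I+\Delta_{h})^{a}-I$, density of $h_1\mathbb{Z}+h_2\mathbb{Z}$ and Fr\'{e}chet's theorem all check out. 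Two details deserve a line each in a full write-up: negative multiples $a<0$ require the reflection identity $\Delta_{-h}^{m+1}f(x)=(-1)^{m+1}\Delta_{h}^{m+1}f(x-(m+1)h)$ rather than the binomial expansion, and the boundedness of a continuous $h$-periodic function should be attributed to compactness of $[0,|h|]$.

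The complex half is where the proposal has a genuine gap, and you have half-diagnosed it yourself. First, with three periods the difference array is three-dimensional, $u_{k_1,k_2,k_3}=\Delta_{h_1}^{k_1}\Delta_{h_2}^{k_2}\Delta_{h_3}^{k_3}f$, and the ``chain of relations'' must become a connectivity argument on the simplex $k_1+k_2+k_3=s$ under the moves $e_i-e_j$; the phrase ``proceeds identically'' hides this. Second, the descent step itself: the correct substitute for boundedness is to show that an entire $v$ with $v(z+h_j)=v(z)+A_j$ for $j=1,2,3$ must be affine, $v(z)=c+\lambda z$ with $A_j=\lambda h_j$ --- for instance because $w(z)=v(z)-(A_1/h_1)z$ is $h_1$-periodic, hence equals $W(e^{2\pi iz/h_1})$ with $W$ analytic on $\mathbb{C}^{*}$, and comparing the $n=0$ Laurent coefficient in $W(q\zeta)=W(\zeta)+B$ forces $B=0$ --- after which the boundary zeros force $\lambda=0$ star by star. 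This can be carried out, but your sketch stops at ``work with the Laurent expansion,'' which is precisely the part that is not a transcription of the real case. Finally, note that with the paper's own gloss on independence ($h_3\notin h_1\mathbb{Z}+h_2\mathbb{Z}$) the complex statement is literally false (take $f(z)=e^{2\pi iz}$, $h_1=2$, $h_2=4$, $h_3=1$, $m=0$): your base case needs the three periods to be $\mathbb{Z}$-linearly independent, equivalently to generate a non-discrete subgroup of $\mathbb{C}$, and any complete proof must state which hypothesis it is actually using.
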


The functional equation $\Delta_h^{m+1}f(x)=0$ had already been introduced in the literature by M. Fr\'{e}chet in 1909 as a particular case of the functional equation  
\begin{equation}\label{fre}
\Delta_{h_1h_2\cdots h_{m+1}}f(x)=0 \ \ (x,h_1,h_2,\dots,h_{m+1}\in \mathbb{R}),
\end{equation}
where $f:\mathbb{R}\to\mathbb{R}$ and $\Delta_{h_1h_2\cdots h_s}f(x)=\Delta_{h_1}\left(\Delta_{h_2\cdots h_s}f\right)(x)$, $s=2,3,\cdots$. In particular, after Fr\'{e}chet's 
seminal paper \cite{frechet}, the solutions of \eqref{fre} are named ``polynomials'' by the Functional Equations community, since it is known that, under very mild regularity conditions on $f$, if $f:\mathbb{R}\to\mathbb{R}$ satisfies \eqref{fre}, then $f(x)=a_0+a_1x+\cdots a_{s-1}x^{s-1}$ for all $x\in\mathbb{R}$ and certain constants $a_i\in\mathbb{R}$. For example, in order to have this property, it is enough for $f$ being locally bounded \cite{frechet}, \cite{almira_antonio}, but there are stronger results \cite{ger1}, \cite{kuczma1}, \cite{mckiernan}. The equation \eqref{fre} can be studied for functions $f:X\to Y$  whenever $X, Y$ are two  $\mathbb{Q}$-vector spaces and the variables $x,h_1,\cdots,h_{m+1}$ are assumed to be elements of $X$:
\begin{equation}\label{fregeneral}
\Delta_{h_1h_2\cdots h_{m+1}}f(x)=0 \ \ (x,h_1,h_2,\dots,h_{m+1}\in X).
\end{equation}
In this context, the general solutions of \eqref{fregeneral} are characterized as functions of the form $f(x)=A_0+A_1(x)+\cdots+A_n(x)$, where $A_0$ is a constant and $A_k(x)=A^k(x,x,\cdots,x)$ for a certain $k$-additive symmetric function $A^k:X^k\to Y$ (we say that $A_k$ is the diagonalization of $A^k$). In particular, if $x\in X$ and $r\in\mathbb{Q}$, then $f(rx)=A_0+rA_1(x)+\cdots+r^nA_n(x)$. Furthermore, it is known that $f:X\to Y$ satisfies \eqref{fregeneral} if and only if it satisfies 
\begin{equation}\label{frepasofijo}
\Delta_{h}^{m+1}f(x):=\sum_{k=0}^{m+1}\binom{m+1}{k}(-1)^{s-k}f(x+kh)=0 \ \ (x,h\in X).
\end{equation}
A proof of this fact follows directly from Djokovi\'{c}'s Theorem \cite{Dj} (see also \cite[Theorem 7.5, page 160]{HIR}, \cite[Theorem 15.1.2., page 418]{kuczma}), which states that  the operators $\Delta_{h_1 h_2\cdots h_s}$ satisfy the equation
\begin{equation}\label{igualdad}
\Delta_{h_1\cdots h_s}f(x)=
\sum_{\epsilon_1,\dots,\epsilon_s=0}^1(-1)^{\epsilon_1+\cdots+\epsilon_s}
\Delta_{\alpha_{(\epsilon_1,\dots,\epsilon_s)}(h_1,\cdots,h_s)}^sf(x+\beta_{(\epsilon_1,\dots,\epsilon_s)}(h_1,\cdots,h_s)),
\end{equation}
where $\alpha_{(\epsilon_1,\dots,\epsilon_s)}(h_1,\cdots,h_s)=(-1)\sum_{r=1}^s\frac{\epsilon_rh_r}{r}$ and $\beta_{(\epsilon_1,\dots,\epsilon_s)}(h_1,\cdots,h_s)=\sum_{r=1}^s\epsilon_rh_r$.  


In section 2 of this paper we prove a version of both Jacobi's and Montel's Theorems for the case of continuous functions defined over the field $\mathbb{Q}_p$ of $p$-adic numbers. In particular, we prove that, if 
\[
\Delta_{h_0}^{m+1}f(x)=0 \ \ \text{ for all } x\in\mathbb{Q}_p,
\]
and $|h_0|_p=p^{-N_0}$ then,  for all $x_0\in \mathbb{Q}_p$, the restriction of $f$  over the set $x_0+p^{N_0}\mathbb{Z}_p$  coincides with a  polynomial $p_{x_0}(x)=a_0(x_0)+a_1(x_0)x+\cdots+a_m(x_0)x^m$. Motivated by this result, we compute, in the last section of this paper, the general solution of the functional equation with restrictions given by 
\begin{equation}\label{LocPol}
\Delta_h^{m+1}f(x)=0 \ \ (x\in X \text{ and } h\in B_X(r)=\{x\in X:\|x\|\leq r\}),
\end{equation}
whenever $f:X\to Y$, $X$ is an ultrametric normed  space over a non-Archimedean valued field $(\mathbb{K},|\cdot|)$ of characteristic zero (so that, it contains a copy of $\mathbb{Q}$), and $Y$ is a $\mathbb{Q}$-vector space. By obvious reasons, we call these functions \textit{uniformly locally polynomial}. 

The definition and basic properties of $\mathbb{Q}_p$ and ultrametric normed spaces over non-Archimedean valued fields can be found, for example, in \cite{gouvea}, \cite{perez} and \cite{robert}. In any case, we would like to stand up the fact  that, if  $X$ is an ultrametric normed  space over a non-Archimedean valued field $(\mathbb{K},|\cdot|)$ and $x,y\in X$ satisfy $\|x\|>\|y\|$, then $\|x+y\|=\|x\|$ (see, e.g.,  \cite[page 22]{perez}).  


\section{$p$-adic Montel's Theorem}

\begin{theorem}\label{teo1} Let $Y$ be a topological space with infinitely many points, and let $N\in\mathbb{Z}$. Then there are continuous functions $f:\mathbb{Q}_p\to Y$ such that
\[
\Delta_hf(x)=0 \Leftrightarrow h\in p^N\mathbb{Z}_p.
\]
These functions are obviously non-constant.
\end{theorem}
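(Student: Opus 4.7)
The approach is to exploit the fact that $p^N\mathbb{Z}_p$ is a clopen additive subgroup of $\mathbb{Q}_p$, so the quotient group $G=\mathbb{Q}_p/p^N\mathbb{Z}_p$ carries a natural discrete topology and the canonical projection $\pi\colon\mathbb{Q}_p\to G$ is locally constant (each coset $x_0+p^N\mathbb{Z}_p$ is an open ball). A continuous function with exactly the prescribed period set is then obtained by composing $\pi$ with an arbitrary injection of $G$ into $Y$.

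The first step is to check that $G$ is countably infinite. Writing an arbitrary element of $\mathbb{Q}_p$ in its standard $p$-adic expansion $x=\sum_{k\ge -n}a_kp^k$ with digits $a_k\in\{0,1,\dots,p-1\}$, its class modulo $p^N\mathbb{Z}_p$ is determined by the finite truncation $\sum_{k=-n}^{N-1}a_kp^k$; there are only countably many such truncations, and letting $n$ grow produces infinitely many distinct cosets, so $G$ is countably infinite. Since $Y$ has infinitely many points one can therefore choose a family $\{y_g\}_{g\in G}\subset Y$ with $y_g\neq y_{g'}$ whenever $g\neq g'$; set $\phi(g)=y_g$ and define $f=\phi\circ\pi$. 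Continuity of $f$ is automatic, since $\pi$ is locally constant, hence so is $f$, regardless of the topology on $Y$.

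It remains to verify the equivalence. If $h\in p^N\mathbb{Z}_p$, then $\pi(x+h)=\pi(x)$ identically, so $f(x+h)=f(x)$ for every $x\in\mathbb{Q}_p$, i.e.\ $\Delta_h f\equiv 0$. Conversely, if $h\notin p^N\mathbb{Z}_p$ then $\pi(h)\neq 0_G$, hence $\pi(x+h)=\pi(x)+\pi(h)\neq\pi(x)$ for every $x\in\mathbb{Q}_p$, and injectivity of $\phi$ yields $f(x+h)\neq f(x)$. No serious obstacle arises here; the whole content of the statement is that, in contrast with the archimedean situation, $\mathbb{Q}_p$ admits clopen additive subgroups of arbitrary size $p^{-N}$, and this is precisely what allows the construction to produce a continuous non-constant $f$ whose period group is exactly $p^N\mathbb{Z}_p$.
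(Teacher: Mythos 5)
Your proposal is correct and follows essentially the same route as the paper: both identify $\mathbb{Q}_p/p^N\mathbb{Z}_p$ as a countably infinite discrete quotient (the paper names it the Pr\"{u}fer group), compose the projection with an injection into $Y$, and observe that local constancy gives continuity while injectivity pins the period group down to exactly $p^N\mathbb{Z}_p$. Your write-up is merely more explicit about the countability of the quotient and the verification of both implications.
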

\noindent \textbf{Proof. } We know that   $p^N\mathbb{Z}_p$ is an additive subgroup of $\mathbb{Q}_p$. Moreover, the quotient group $\mathbb{Q}_p/p^N\mathbb{Z}_p$ is isomorphic to the well known Pr\"{u}fer group $C_{p^{(\infty)}}=\bigcup_{k=0}^\infty C_{p^k}$ (here, $C_{p^k}$ denotes the cyclic group of order $p^k$). In particular, there exists an infinite countable set $S_N\subset \mathbb{Q}_p$ such that $\{s+p^N\mathbb{Z}_p\}_{s\in S_N}$ defines a partition of $\mathbb{Q}_p$ in clopen sets. If $\lambda:S_N\to Y$ is any inyective map, the function $f:\mathbb{Q}_p\to Y$ defined by $f(x)=\lambda(s)$ if and only if $x\in s+p^N\mathbb{Z}_p$, $s\in S_N$, satisfies our requirements. {\hfill $\Box$}


\begin{lemma}
Let $(Y,d)$ be a metric space. If $f:\mathbb{Q}_p\to Y$ is continuous and $h\in \mathfrak{P}_0(f)$, $|h|_p=p^{-N}$,  then $p^N\mathbb{Z}_p\subseteq \mathfrak{P}_0(f)$. In particular, $\mathfrak{P}_0(f)$ is a clopen additive subgroup of $\mathbb{Q}_p$.
\end{lemma}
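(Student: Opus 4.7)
The plan is to exploit two ingredients: continuity of $f$ forces $\mathfrak{P}_0(f)$ to be topologically closed in $\mathbb{Q}_p$, and the cyclic subgroup generated by a single period $h$ of norm $p^{-N}$ is already dense in $p^N\mathbb{Z}_p$. Combining these two observations yields the first assertion, and the ``clopen'' conclusion then follows because a subgroup of a topological group that contains an open neighbourhood of $0$ is automatically open.

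\textbf{Step 1: $\mathfrak{P}_0(f)$ is closed.} Suppose $h_n\in \mathfrak{P}_0(f)$ and $h_n\to h$ in $\mathbb{Q}_p$. For every $x\in\mathbb{Q}_p$ we have $f(x+h_n)=f(x)$, and by continuity of $f$ also $f(x+h_n)\to f(x+h)$ in $(Y,d)$. Hence $d(f(x+h),f(x))=0$, i.e.\ $f(x+h)=f(x)$, so $h\in\mathfrak{P}_0(f)$.

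\textbf{Step 2: density of $\mathbb{Z} h$ in $p^N\mathbb{Z}_p$.} Since $|h|_p=p^{-N}$, we may write $h=p^Nu$ with $|u|_p=1$, i.e.\ $u\in\mathbb{Z}_p^{\times}$. Multiplication by $p^Nu$ is a homeomorphism of $\mathbb{Q}_p$ sending $\mathbb{Z}$ to $\mathbb{Z} h$ and $\mathbb{Z}_p$ to $p^N u\mathbb{Z}_p=p^N\mathbb{Z}_p$ (the last equality because $u\mathbb{Z}_p=\mathbb{Z}_p$ for any unit $u$). Since $\mathbb{Z}$ is dense in $\mathbb{Z}_p$ (any element of $\mathbb{Z}_p$ is the limit of the partial sums of its base-$p$ expansion), it follows that $\mathbb{Z} h$ is dense in $p^N\mathbb{Z}_p$.

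\textbf{Step 3: conclusion.} As $\mathfrak{P}_0(f)$ is a subgroup containing $h$, it contains $\mathbb{Z} h$; by Step~1 it also contains its closure, so $p^N\mathbb{Z}_p\subseteq \mathfrak{P}_0(f)$. This proves the first claim. For the ``in particular'' part, $\mathfrak{P}_0(f)$ now contains the open neighbourhood $p^N\mathbb{Z}_p$ of $0$; translating by any $g\in\mathfrak{P}_0(f)$ shows $g+p^N\mathbb{Z}_p\subseteq\mathfrak{P}_0(f)$, so $\mathfrak{P}_0(f)$ is open, and combined with Step~1 it is clopen.

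The only slightly non-routine step is Step~2, whose point is that the non-Archimedean topology makes $\overline{\mathbb{Z} h}=p^N\mathbb{Z}_p$ rather than just a discrete cyclic group as in the Archimedean analogue; everything else is a standard continuity/subgroup argument and should not present any real obstacle.
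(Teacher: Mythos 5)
Your proof is correct and follows essentially the same route as the paper: establish closedness of $\mathfrak{P}_0(f)$ via continuity, and then observe that the integer multiples of $h$ are dense in $p^N\mathbb{Z}_p$. The only difference is that you spell out the density claim (which the paper merely asserts as $\overline{\{kh\}_{k=1}^\infty}=p^N\mathbb{Z}_p$) and the openness of the subgroup, both of which are welcome but routine additions.
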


\noindent \textbf{Proof. } The continuity of $f$ implies that $\mathfrak{P}_0(f)$ is closed. Let us include, for the sake of completeness, the proof of this fact.  Let $\{h_k\}\subset \mathfrak{P}_0(f)$, $\lim_{k\to\infty}h_k=h$. Then
\begin{eqnarray*}
0 &\leq& d(f(x+h),f(x))\leq d(f(x+h),f(x+h_k))+d(f(x+h_k),f(x))\\
&=& d(f(x+h),f(x+h_k))\to 0  \ \ (\text{ for } k\to\infty),
\end{eqnarray*}
Hence $f(x+h)=f(x)$ for all $x\in\mathbb{Q}_p$. Thus $h\in \mathfrak{P}_0(f)$.

Take $h\in \mathfrak{P}_0(f)$, $|h|_p=p^{-N}$. Then $\overline{\{kh\}_{k=1}^\infty}^{\mathbb{Q}_p}=p^N\mathbb{Z}_p\subset \mathfrak{P}_0(f)$. This ends the proof. {\hfill $\Box$}

\begin{corollary}[p-adic version of Jacobi's Theorem] Let $(Y,d)$ be a metric space. If $f:\mathbb{Q}_p\to Y$ is continuous and non-constant, then $\mathfrak{P}_0(f)=\{0\}$ or $\mathfrak{P}_0(f)=p^N\mathbb{Z}_p$ for a certain $N\in\mathbb{Z}$. In particular, the continuous function $f:\mathbb{Q}_p\to Y$ is a constant if and only if it contains an unbounded sequence of periods.
\end{corollary}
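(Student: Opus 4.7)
The strategy is to leverage the previous lemma, which tells us that $\mathfrak{P}_0(f)$ is a clopen additive subgroup of $\mathbb{Q}_p$ and that it contains $p^N\mathbb{Z}_p$ as soon as it contains some element $h$ with $|h|_p=p^{-N}$. What remains is a purely group-theoretic classification of the clopen additive subgroups of $\mathbb{Q}_p$ that can arise here, with the non-constancy of $f$ used exactly to rule out $\mathfrak{P}_0(f)=\mathbb{Q}_p$.

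Assuming $\mathfrak{P}_0(f)\neq\{0\}$, I would introduce
\[
S=\{N\in\mathbb{Z}:p^N\mathbb{Z}_p\subseteq \mathfrak{P}_0(f)\}.
\]
Any nonzero period $h$ has $|h|_p=p^{-N}$ for some $N\in\mathbb{Z}$, so the lemma places that $N$ in $S$; hence $S\neq\emptyset$. Since $p^{N}\mathbb{Z}_p\supseteq p^{N+1}\mathbb{Z}_p$, the set $S$ is upward-closed. If $S$ had no minimum it would coincide with $\mathbb{Z}$, giving $\mathfrak{P}_0(f)\supseteq\bigcup_{N\in\mathbb{Z}}p^N\mathbb{Z}_p=\mathbb{Q}_p$ and forcing $f$ to be constant---contradicting the hypothesis. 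Therefore $S$ has a minimum $N_0$, so $p^{N_0}\mathbb{Z}_p\subseteq\mathfrak{P}_0(f)$. The converse inclusion is immediate: any $h\in\mathfrak{P}_0(f)\setminus\{0\}$ has $|h|_p=p^{-k}$ with $k\in S$ by the lemma, and minimality forces $k\geq N_0$, i.e.\ $h\in p^{N_0}\mathbb{Z}_p$.

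For the ``in particular'' clause I would unwind ``unbounded sequence of periods'' as the existence of $\{h_k\}\subset\mathfrak{P}_0(f)$ with $|h_k|_p\to\infty$. If $f$ is constant then $\mathfrak{P}_0(f)=\mathbb{Q}_p$ and such a sequence is trivially available. Conversely, an unbounded sequence of periods cannot be contained in any $p^N\mathbb{Z}_p=\{h:|h|_p\leq p^{-N}\}$, so neither of the two bounded alternatives $\{0\}$ or $p^N\mathbb{Z}_p$ provided by the dichotomy can hold; hence $\mathfrak{P}_0(f)=\mathbb{Q}_p$ and $f$ is constant. There is no substantial obstacle in this argument: all the analytic content has already been packaged in the lemma, and what remains is organizing the case analysis on $S$ so as to identify the minimum and to invoke non-constancy at precisely the right moment.
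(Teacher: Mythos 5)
Your proof is correct, but it takes a more self-contained route than the paper. The paper's entire proof is a one-line appeal to the classification of closed additive subgroups of $\mathbb{Q}_p$: every proper nontrivial one equals $p^N\mathbb{Z}_p$ for some $N\in\mathbb{Z}$ (cited from Salzmann et al., Prop.\ 52.3), which combined with the preceding lemma (that $\mathfrak{P}_0(f)$ is a closed subgroup) and non-constancy immediately gives the dichotomy. You instead avoid the external citation and re-derive exactly the piece of that classification you need, using the sharper output of the lemma --- namely that a single period of absolute value $p^{-N}$ forces $p^N\mathbb{Z}_p\subseteq\mathfrak{P}_0(f)$ --- and then analyzing the upward-closed set $S=\{N:p^N\mathbb{Z}_p\subseteq\mathfrak{P}_0(f)\}$, with non-constancy ruling out $S=\mathbb{Z}$. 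The case analysis on whether $S$ has a minimum is sound (a nonempty upward-closed subset of $\mathbb{Z}$ without a minimum is all of $\mathbb{Z}$), the converse inclusion $\mathfrak{P}_0(f)\subseteq p^{N_0}\mathbb{Z}_p$ is handled correctly via minimality, and your treatment of the ``in particular'' clause matches what the statement requires. What your version buys is independence from the textbook reference; what the paper's version buys is brevity and the observation that the result is really just the subgroup classification in disguise. Incidentally, your argument closely parallels the Case 1/Case 2 analysis the authors themselves carry out later in the proof of the $p$-adic Montel theorem for $\mathfrak{P}_m(f)$.
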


\noindent \textbf{Proof. } It is well known (and easy to prove) that every proper nontrivial closed additive subgroup of $\mathbb{Q}_p$ is of the form  $p^N\mathbb{Z}_p$ for a certain $N\in\mathbb{Z}$ (see, e.g., \cite[p. 283, Proposition 52.3]{classicalfields}).   {\hfill $\Box$}

 \begin{theorem}[p-adic version of Montel's Theorem]  \label{mont} Let $(\mathbb{K},|\cdot|_{\mathbb{K}})$ be a valued field such that $\mathbb{Q}_p\subseteq \mathbb{K}$ and the inclusion $\mathbb{Q}_p \hookrightarrow\mathbb{K}$ is continuous. Let us assume that $f:\mathbb{Q}_p\to \mathbb{K}$ is continuous, and define
 \[
 \mathfrak{P}_m(f)=\{h\in\mathbb{Q}_p:\Delta_h^{m+1}f=0\}.
 \]
Then either $\mathfrak{P}_m(f) =\{0\}$,  $\mathfrak{P}_m(f) = \mathbb{Q}_p$, or $\mathfrak{P}_m(f)=p^N\mathbb{Z}_p$ for a certain  $N\in\mathbb{Z}$. Furthermore, all these cases are effectively attained by some appropriate instances of the function $f$. Finally, for all $a\in\mathbb{Q}_p$ there exists constants $a_0,a_1,\cdots,a_m\in \mathbb{K}$ such that $f(x)=a_0+\cdots+a_mx^m$ for all $x\in a+\mathfrak{P}_m(f)$.
In particular, $f$ is a polynomial of degree $\leq m$ if and only if $\mathfrak{P}_m(f)$ contains an unbounded sequence. \end{theorem}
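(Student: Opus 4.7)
The plan is to show that whenever $\mathfrak{P}_m(f)\neq\{0\}$ it automatically contains a full clopen subgroup $p^N\mathbb{Z}_p$; once this is known, the trichotomy follows from the classification of closed additive subgroups of $\mathbb{Q}_p$ invoked in the $p$-adic Jacobi Corollary, combined with the closedness of $\mathfrak{P}_m(f)$. Closedness itself is verified exactly as in the previous lemma: if $h_k\to h$ and $\Delta_{h_k}^{m+1}f=0$ for each $k$, then pointwise continuity of $h\mapsto\Delta_h^{m+1}f(x)$ yields $\Delta_h^{m+1}f=0$.

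The core step is the following. Pick $h_0\in\mathfrak{P}_m(f)\setminus\{0\}$ with $|h_0|_p=p^{-N_0}$ and fix $x_0\in\mathbb{Q}_p$. The sequence $\varphi(n):=f(x_0+nh_0)$, $n\in\mathbb{Z}$, satisfies $\Delta^{m+1}\varphi(n)=\Delta_{h_0}^{m+1}f(x_0+nh_0)=0$, so by the classical scalar result $\varphi$ is the restriction to $\mathbb{Z}$ of a polynomial $P(t)=c_0+c_1t+\cdots+c_mt^m$ with $c_i\in\mathbb{K}$. Because $\mathbb{Q}_p\hookrightarrow\mathbb{K}$ is continuous, both $t\mapsto P(t)$ and $t\mapsto f(x_0+th_0)$ are continuous on $\mathbb{Z}_p$; agreeing on the dense subset $\mathbb{Z}$, they agree on all of $\mathbb{Z}_p$. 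As $t$ ranges over $\mathbb{Z}_p$, the point $y=x_0+th_0$ ranges exactly over $x_0+p^{N_0}\mathbb{Z}_p$ (since $h_0\mathbb{Z}_p=p^{N_0}\mathbb{Z}_p$), and inverting the substitution $t=(y-x_0)/h_0$ exhibits $f|_{x_0+p^{N_0}\mathbb{Z}_p}$ as a polynomial of degree $\leq m$ in $y$.

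This immediately yields the ``for all $a\in\mathbb{Q}_p$'' clause, and also $\Delta_k^{m+1}f\equiv 0$ for every $k\in p^{N_0}\mathbb{Z}_p$, giving $p^{N_0}\mathbb{Z}_p\subseteq\mathfrak{P}_m(f)$ and the trichotomy. The three cases are effectively attained: $f\equiv 0$ realises $\mathbb{Q}_p$; composing the function supplied by Theorem~\ref{teo1} with any injection of its target into $\mathbb{K}$ realises the intermediate cases $p^N\mathbb{Z}_p$; and for $\mathfrak{P}_m(f)=\{0\}$ one picks any continuous $f:\mathbb{Q}_p\to\mathbb{K}$ that fails to be a polynomial of degree $\leq m$ on every nontrivial ball $x_0+p^N\mathbb{Z}_p$, e.g.\ a generic Mahler series of unbounded degree in $\mathbb{K}$.

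For the last assertion, if $\mathfrak{P}_m(f)$ contains an unbounded sequence it cannot lie in any bounded ball $p^N\mathbb{Z}_p$, so by the trichotomy $\mathfrak{P}_m(f)=\mathbb{Q}_p$ and $f$ satisfies Fr\'echet's equation \eqref{frepasofijo} for every $h\in\mathbb{Q}_p$. Hence $f=A_0+A_1+\cdots+A_m$ with each $A_k$ the diagonalization of a $k$-additive symmetric map $\mathbb{Q}_p^k\to\mathbb{K}$. Continuity of $f$ transfers to continuity of each $A_k$ after isolating the homogeneous components through a Vandermonde argument applied to the identity $f(rx)=\sum_{k=0}^m r^kA_k(x)$ for $r\in\mathbb{Q}$, and by density of $\mathbb{Q}$ in $\mathbb{Q}_p$ together with the continuous inclusion $\mathbb{Q}_p\hookrightarrow\mathbb{K}$, each $A_k$ is $\mathbb{Q}_p$-linear in every variable; therefore $A_k(x)=a_kx^k$ and $f$ is an ordinary polynomial of degree $\leq m$. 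The main obstacle I anticipate is precisely this continuity-to-$\mathbb{Q}_p$-homogeneity passage at the level of $k$-additive functions, together with the clean isolation of the homogeneous components; the remaining steps are bookkeeping on finite differences.
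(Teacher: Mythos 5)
Your core step is the paper's own argument in slightly different clothing: the paper takes the Lagrange polynomial $p_0$ of degree $\leq m$ interpolating $f$ at $x_0+kh_0$, $k=0,\dots,m$, uses $\Delta_{h_0}^{m+1}f=\Delta_{h_0}^{m+1}p_0=0$ to propagate the agreement to all of $x_0+h_0\mathbb{N}$, and then invokes density of $h_0\mathbb{N}$ in $p^{N_0}\mathbb{Z}_p$ plus continuity; your Newton forward-difference formulation of ``vanishing $(m+1)$-st differences on $\mathbb{Z}$ implies polynomial of degree $\leq m$'' is the same fact, and the density/continuity step is identical. From there both arguments get $p^{N_0}\mathbb{Z}_p\subseteq\mathfrak{P}_m(f)$ and the local polynomial structure. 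One caution: deducing the trichotomy from ``the classification of closed additive subgroups of $\mathbb{Q}_p$'' presupposes that $\mathfrak{P}_m(f)$ is a subgroup, which for $m\geq 1$ is not obvious a priori; but the detour is unnecessary, since once every nonzero $h_0\in\mathfrak{P}_m(f)$ forces $h_0\mathbb{Z}_p\subseteq\mathfrak{P}_m(f)$, the set is a nested union of balls centered at $0$ and the trichotomy is immediate --- this is exactly the paper's argument via $\inf\{N:p^N\mathbb{Z}_p\subseteq\mathfrak{P}_m(f)\}$. (For the realisation of $\mathfrak{P}_m(f)=\{0\}$, note also that $f(x)=x^{m+1}$ does the job instantly, and Corollary \ref{cor_montel} gives an explicit example; ``a generic Mahler series'' is too vague to count as a construction.)

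Where you genuinely diverge is the final clause, and there you have left a real gap by your own admission. You route ``$\mathfrak{P}_m(f)$ unbounded $\Rightarrow$ $f$ polynomial'' through Fr\'echet's theorem, the decomposition $f=A_0+A_1+\cdots+A_m$ into diagonalizations of $k$-additive maps, and a continuity-to-$\mathbb{Q}_p$-homogeneity argument that you flag as the main unresolved obstacle. That passage can be completed (Vandermonde on $f(rx)=\sum r^kA_k(x)$ to isolate and establish continuity of each $A_k$, polarization for $A^k$, then density of $\mathbb{Q}$ in $\mathbb{Q}_p$), but it is an unnecessary detour: your own core step already shows that $f$ restricted to $p^N\mathbb{Z}_p$ is a polynomial of degree $\leq m$ for every $N$ with $p^N\mathbb{Z}_p\subseteq\mathfrak{P}_m(f)$, and since two polynomials of degree $\leq m$ agreeing on an infinite set coincide, letting $N\to-\infty$ glues these local polynomials into a single global one. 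That is how the paper closes the unbounded case, with no multiadditive machinery at all.
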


\noindent \textbf{Proof. }  Assume  $\mathfrak{P}_m(f) \neq \{0\}$. Let  $h_0\in \mathfrak{P}_m(f)$, $h_0\neq 0$. Then
$\Delta_{h_0}^{m+1}f(x)=0$ for all $x\in\mathbb{Q}_p$. Let $x_0\in\mathbb{Q}_p$ and let $p_0(t)\in\mathbb{K}[t]$ be the polynomial of degree $\leq m$ such that $f(x_0+kh_0)=p_0(x_0+kh_0)$ for all $k\in\{0,1,\cdots,m\}$ (this polynomial exists and it is unique, thanks to Lagrange's interpolation formula). Then
\begin{eqnarray*}
0 &=&
\Delta_{h_0}^{m+1}f(x_0)=\sum_{k=0}^m\binom{m+1}{k}(-1)^{m+1-k}f(x_0+kh_0)+f(x_0+(m+1)h_0)\\
&=& \sum_{k=0}^m\binom{m+1}{k}(-1)^{m+1-k}p_0(x_0+kh_0)+f(x_0+(m+1)h_0)\\
&=& -p_0(x_0+(m+1)h_0)+f(x_0+(m+1)h_0),
\end{eqnarray*}
since  $0=\Delta_{h_0}^{m+1}p(x_0)=\sum_{k=0}^{m+1}\binom{m+1}{k}(-1)^{m+1-k}p_0(x_0+kh_0)$. This means that $f(x_0+(m+1)h_0)=p_0(x_0+(m+1)h_0)$. In particular, $p_0=q$, where $q$ denotes the polynomial of degree $\leq m$ which interpolates $f$ at the nodes $\{x_0+kh_0\}_{k=1}^{m+1}$. This argument can be repeated to prove that $p_0$ interpolates $f$ at all the nodes $x_0+h_0\mathbb{N}$. On the other hand, if $|h_0|_p=p^{-N}$, then $h_0\mathbb{N}$ is a dense subset of $p^N\mathbb{Z}_p$. It follows that $f_{|x_0+p^N\mathbb{Z}_p}=(p_0)_{|x_0+p^N\mathbb{Z}_p}$, since $f$ is continuous. Thus, we have proved that the restrictions of $f$ over the sets of the form $x_0+p^N\mathbb{Z}_p$ are polynomials of degree $\leq m$. On the other hand, we have already shown the  existence an infinite countable set $S_N\subset \mathbb{Q}_p$ such that $\{s+p^N\mathbb{Z}_p\}_{s\in S_N}$ is a partition of $\mathbb{Q}_p$ in clopen sets. Hence there exists a family of polynomials $\{p_s(t)\}_{s\in S_N}\subset \mathbb{K}[t]$ such that $\deg p_s\leq m$ for all $s\in S_N$ and $f(x)=p_s(x)$ if and only if $x\in s+p^N\mathbb{Z}_p$, $s\in S_N$. Let $h\in p^N\mathbb{Z}_p$. We want to show that $h\in \mathfrak{P}_m(f)$. Now, given $x\in \mathbb{Q}_p$, there exists $s\in S_N$ such that $x+p^N\mathbb{Z}_p=s+p^N\mathbb{Z}_p$. In particular, $f_{|\{x,x+h,x+2h,\cdots,x+mh,x+(m+1)h\}}=(p_s)_{|\{x,x+h,x+2h,\cdots,x+mh,x+(m+1)h\}}$, so that $\Delta_h^{m+1} f(x)= \Delta_h^{m+1} p_s(x)=0$. This proves that $p^N\mathbb{Z}_p\subseteq \mathfrak{P}_m(f)$.

We may summarize the the arguments above by claiming that if $h_0\in \mathfrak{P}_m(f)$ and $|h_0|_p=p^{-N}$, then $p^N\mathbb{Z}_p\subseteq \mathfrak{P}_m(f)$ and
\begin{equation}\label{fam}
f(x)=p_s(x) \Leftrightarrow x\in s+p^N\mathbb{Z}_p \text{ and } s\in S_N,
\end{equation}
where  $\{p_s(t)\}_{s\in S_N}$ is a family of polynomials $p_s\in \mathbb{K}[t]$ verifying  $\deg p_s\leq m$ for all $s\in S_N$, and  $\{s+p^N\mathbb{Z}_p\}_{s\in S_N}$ is a partition of $\mathbb{Q}_p$. Furthermore, for any function $f$ satisfying $(\ref{fam})$, we have that  $p^N\mathbb{Z}_p\subseteq \mathfrak{P}_m(f)$.

Thus, there are just two possibilities we may consider:
\begin{itemize}
\item[Case 1:]  $\inf\{N\in\mathbb{Z}:p^N\mathbb{Z}_p\subseteq \mathfrak{P}_m(f)=\}-\infty$.  
\end{itemize}
\noindent In this case $\mathfrak{P}_m(f)=\mathbb{Q}_p$ and $f$ is a polynomial of degree $\leq m$.
\begin{itemize}
\item[Case 2: ] $\inf\{N\in\mathbb{Z}:p^N\mathbb{Z}_p\subseteq \mathfrak{P}_m(f)\}=N_0$.  \end{itemize}

\noindent In this case
 $\mathfrak{P}_m(f)=p^{N_0}\mathbb{Z}_p$ and $f$ satisfies $(\ref{fam})$ with $N=N_0\in\mathbb{Z}$.

This ends the proof. {\hfill $\Box$}

\begin{definition} Given $f:\mathbb{Q}_p\to K$ a continuous function, we say that $f$ is locally an ordinary polynomial if for each $x_0\in \mathbb{Q}_p$ there exist a positive radius $r>0$ and constants $a_0,a_1,\cdots,a_m\in \mathbb{K}$ such that $f(x)=a_0+\cdots+a_mx^m$ for all $x\in x_0+B_{\mathbb{Q}_p}(r)$. We say that $f$ is uniformly locally an ordinary polynomial if, furthermore,  the radius $r>0$ can be chosen the same for all $x_0\in\mathbb{Q}_p$. 
\end{definition}

\begin{corollary}\label{cor_montel} If $f:\mathbb{Q}_p\to K$ is continuous, then $f$ is uniformly locally an ordinary polynomial if and only if  $\mathfrak{P}_m(f)\neq \{0\}$. There are locally ordinary polynomial functions   $f:\mathbb{Q}_p\to K$ such that  $\mathfrak{P}_m(f)= \{0\}$.
\end{corollary}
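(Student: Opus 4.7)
Plan: The equivalence is an immediate consequence of Theorem \ref{mont}, and the existence claim is handled by an explicit locally constant construction with anomalies accumulating at $0$.

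For the forward implication, assume $\mathfrak{P}_m(f)\neq\{0\}$. Theorem \ref{mont} forces either $\mathfrak{P}_m(f)=\mathbb{Q}_p$, in which case $f$ is globally a polynomial of degree $\leq m$ and the conclusion is automatic, or $\mathfrak{P}_m(f)=p^{N_0}\mathbb{Z}_p$ for some $N_0\in\mathbb{Z}$. In the latter case the same theorem yields that $f|_{a+p^{N_0}\mathbb{Z}_p}$ is a polynomial of degree $\leq m$ for every $a\in\mathbb{Q}_p$; since $a+p^{N_0}\mathbb{Z}_p=a+B_{\mathbb{Q}_p}(p^{-N_0})$, the radius $r=p^{-N_0}$ witnesses uniform local polynomiality.

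For the converse, let $r>0$ be a uniform radius and choose $N\in\mathbb{Z}$ with $p^{-N}\leq r$. Then $x+p^N\mathbb{Z}_p\subseteq x+B_{\mathbb{Q}_p}(r)$ for every $x\in\mathbb{Q}_p$, so $f|_{x+p^N\mathbb{Z}_p}$ coincides with a polynomial of degree $\leq m$. For any $h\in p^N\mathbb{Z}_p$ the points $x,x+h,\dots,x+(m+1)h$ all lie in the coset $x+p^N\mathbb{Z}_p$, and $\Delta_h^{m+1}$ annihilates polynomials of degree $\leq m$, so $\Delta_h^{m+1}f(x)=0$. Hence $p^N\mathbb{Z}_p\subseteq\mathfrak{P}_m(f)$, and in particular $\mathfrak{P}_m(f)\neq\{0\}$.

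For the existence statement I would construct a continuous, locally constant $f$ that is not uniformly locally polynomial. For each $n\geq 1$ set $B_n=p^{-n}+p^n\mathbb{Z}_p$; because $B_n$ is contained in the shell $\{x\in\mathbb{Q}_p:|x|_p=p^n\}$, these balls sit in pairwise disjoint shells. On the sub-ball $p^{-n}+p^{n+1}\mathbb{Z}_p$ of $B_n$ put $f\equiv 0$, on its complement in $B_n$ put $f\equiv 1$, and elsewhere on $\mathbb{Q}_p$ put $f\equiv 0$. A short ultrametric case-check (using that $\mathbb{Z}_p$ is a clopen neighborhood of $0$ on which $f\equiv 0$ and that every other point lies in a clopen ball on which $f$ is constant) shows that $f$ is continuous and locally a polynomial of degree $\leq m$. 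To see that uniformity fails, given any candidate $r=p^{-M}$ choose $n>|M|$: the ball $p^{-n}+B_{\mathbb{Q}_p}(r)$ lies inside the shell of norm $p^n$ and properly contains $B_n$, and on this ball $f$ vanishes on the infinite sub-ball $p^{-n}+p^{n+1}\mathbb{Z}_p$ yet equals $1$ elsewhere in $B_n$, so no polynomial of degree $\leq m$ can agree with $f$ there (such a polynomial would vanish on an infinite set and so be identically zero). By the converse direction already proved, this forces $\mathfrak{P}_m(f)=\{0\}$. The only genuine obstacle is arranging the construction so that the local anomalies do not spoil continuity at $0$ or interfere with each other, which is why each anomalous ball $B_n$ is placed in its own shell $S_n$.
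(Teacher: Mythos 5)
Your proof is correct, and the equivalence part is exactly what the paper intends: the paper simply declares the first claim ``an easy consequence of Theorem \ref{mont}'', and your two directions (reading off the uniform radius $p^{-N_0}$ from the coset decomposition, and conversely observing that $\Delta_h^{m+1}$ annihilates the degree-$\le m$ interpolating polynomial on each coset $x+p^N\mathbb{Z}_p$) are the intended filling-in. Where you genuinely diverge is the existence claim. The paper's example is $f(x)=p^nx^m$ on $p^{-n}+p^n\mathbb{Z}_p$ and $0$ elsewhere, and it verifies $\mathfrak{P}_m(f)=\{0\}$ by a \emph{direct computation}: for $|h|_p=p^{-N}$ it evaluates $\Delta_h^{m+1}f$ at the point $p^{-N(m+1)}$, arguing that all shifted nodes fall outside the exceptional balls so that only one term of the alternating sum survives. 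You instead build a locally constant function whose restriction to any sufficiently large ball takes the value $0$ on an infinite subset and the value $1$ elsewhere on $B_n$, so no polynomial of degree $\le m$ over the field $K$ can match it there; you then conclude $\mathfrak{P}_m(f)=\{0\}$ from the contrapositive of the forward implication already proved. This is not circular (that implication rests only on Theorem \ref{mont}), and it trades the explicit difference computation for a soft vanishing/degree argument, which sidesteps the somewhat delicate bookkeeping of which nodes $p^{-N(m+1)}+kh$ avoid the exceptional balls. Both constructions rest on the same geometric device of placing the anomalies in pairwise disjoint spheres $|x|_p=p^n$, which makes continuity automatic.
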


\begin{proof} The first claim is an easy consequence of Theorem \ref{mont}. To prove the existence of locally ordinary polynomials which are not uniformly locally ordinary polynomials it will be enough to construct an example. With this objective in mind, we define $f:\mathbb{Q}_p\to\mathbb{Q}_p$ as follows:
 \[
f(x)=\left\{
\begin{array}{cccccc}
p^n x^m &  & \text{if} & n\in\mathbb{N}  \text{ and }  x\in p^{-n}+p^n\mathbb{Z}_p\\
0 &  & \text{if} & x\not\in  \bigcup_{n=0}^{\infty} (p^{-n}+p^n\mathbb{Z}_p)\\
\end{array},
\right.
\]
Obviously, $f(x)$ is locally an ordinary polynomial of degree $\leq m$. On the other hand, if $N\geq 1$ is a natural number, $h\in\mathbb{Q}_p$, $|h|=p^{-N}$, then $p^{-N(m+1)}+kh\not\in  \bigcup_{n=0}^{\infty} (p^{-n}+p^n\mathbb{Z}_p)$, $k=1,2,\cdots,m+1$. Hence  
\begin{eqnarray*}
\Delta_h^{m+1}f(p^{-N(m+1)}) &= & \sum_{k=0}^{m+1} \binom{m+1}{k}(-1)^{m+1-k}f(p^{-N(m+1)}+kh) \\
&=&  (-1)^{m+1}f(p^{-N(m+1)}) =p^{N(m+1)} p^{-Nm(m+1)} \neq 0 
\end{eqnarray*}
and $\mathfrak{P}_m(f)= \{0\}$.
\end{proof}


\section{Characterization of uniformly locally polynomial functions}

The results of the section above and, in particular, the p-adic Montel's Theorem and Corollary \ref{cor_montel}, motivate us to study, for functions $f:X\to Y$ (where
 $X$ is an ultrametric normed  space over a non-Archimedean valued field $(\mathbb{K},|\cdot|)$ of characteristic zero, and $Y$ is a $\mathbb{Q}$-vector space), the functional equation with restrictions
\begin{equation} \label{fund}
\Delta_h^{m+1}f(x)=0 \ \ (x\in X, h\in B_X(r)=\{x:\|x\|\leq r\}).
\end{equation}  

\begin{definition} We say that $f:X\to Y$ is an uniformly locally polynomial function if it solves the functional equation \eqref{fund} for a certain $r>0$. 
\end{definition}
The best motivation for the concept above should be found in the statement of the following theorem, which is the main result of this section:
\begin{theorem}[Characterization of uniformly locally polynomial functions] \label{CL} Assume that $f:X\to Y$ satisfies \eqref{fund}
and let $\phi(r,m)=r\left(\prod_{k=2}^{m+1} \max\{|1/t|:t=1,2,\cdots,k\}\right)^{-1}$. Then for all $x_0\in X$ there exists a constant $A_{0,x_0}$ and  $k$-additive symmetric  maps 
$$A^{k,x_0}:B_X(\phi(r,m))\times\cdots ^{(k\text{ times})}\times B_X(\phi(r,m))\to Y$$ for $k=1,2,\cdots,m$, such that 
\[
f(x_0+z)=A_{0,x_0}+\sum_{k=1}^mA_{k,x_0}(z) \  \ \text{ for all } \ z\in B_X(\phi(r,m));
\]  
where $A_{k,x_0}(z)=A^{k,x_0}(z,z,\cdots,z)$ is the diagonalization of $A^{k}(z_1,\cdots,z_k)$, $k=1,\cdots,m$. 
\end{theorem}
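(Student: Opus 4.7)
The plan is to proceed by induction on $m$ after strengthening the statement to the following local form: if $g:B_X(R)\to Y$ satisfies $\Delta_h^{m+1}g(z)=0$ for all $z,h\in B_X(R)$, then on $B_X(\phi(R,m))$ one has $g(z)=A_0+\sum_{k=1}^m A_k(z)$ with each $A_k$ the diagonalization of a $k$-additive symmetric map on $B_X(\phi(R,m))^k$. Theorem \ref{CL} then follows by setting $g(z):=f(x_0+z)$ and restricting to $B_X(r)$, since the ultrametric inequality guarantees that $z,h\in B_X(r)$ implies $z+kh\in B_X(r)$. The base case $m=0$ is immediate: $\Delta_h g(z)=0$ forces $g\equiv g(0)$ on $B_X(R)$, and the empty product gives $\phi(R,0)=R$.

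For the inductive step, I would first invoke Djokovi\'c's identity \eqref{igualdad} with $s=m+1$ to convert the one-variable restricted equation into its symmetric multi-variable counterpart. For $h_1,\ldots,h_{m+1}\in B_X(\rho_1)$ with $\rho_1:=R/M_{m+1}$ and $M_k:=\max\{|1/t|_{\mathbb{K}}:t=1,\ldots,k\}$, the ultrametric inequality yields $\|\alpha_\epsilon\|_X\le M_{m+1}\cdot\rho_1=R$, so each summand $\Delta_{\alpha_\epsilon}^{m+1}g(z+\beta_\epsilon)$ on the right of \eqref{igualdad} vanishes. Hence $\Delta_{h_1\cdots h_{m+1}}g(z)=0$ for all $z\in B_X(R)$ and $h_i\in B_X(\rho_1)$.

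Next, I would define $A^m(h_1,\ldots,h_m):=\frac{1}{m!}\Delta_{h_1\cdots h_m}g(0)$ on $B_X(\rho_1)^m$. Symmetry is automatic from the commutativity of difference operators; for $m$-additivity in the first slot, the identity $\Delta_{h_1+h_1'}F(x)=\Delta_{h_1}F(x)+\Delta_{h_1'}F(x+h_1)$ applied to $F=\Delta_{h_2\cdots h_m}g$ at $x=0$, combined with the vanishing $\Delta_{h_1 h_1' h_2\cdots h_m}g(0)=0$ established above, gives the desired linearity. Setting $g_1(z):=g(z)-A_m(z)$ on $B_X(\rho_1)$, a direct multilinear expansion yields $\Delta_h^m A_m(z)=m!\,A_m(h)$, while specializing the multi-variable equation to $h_1=\cdots=h_m=h$ gives $\Delta_k\Delta_h^m g(z)=0$ for $k\in B_X(\rho_1)$, so $\Delta_h^m g(z)=\Delta_h^m g(0)=m!\,A_m(h)$ for $z\in B_X(\rho_1)$. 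Therefore $\Delta_h^m g_1(z)=0$ for all $z,h\in B_X(\rho_1)$, and the local induction hypothesis applied to $g_1$ produces the remaining maps $A^1,\ldots,A^{m-1}$ on $B_X(\phi(\rho_1,m-1))^k$. The identity $\phi(\rho_1,m-1)=R/\prod_{k=2}^{m+1}M_k=\phi(R,m)$ closes the recursion.

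The main obstacle I anticipate is the careful formulation of the local induction hypothesis. Because $A_m$ is defined only on $B_X(\rho_1)$, the reduced function $g_1=g-A_m$ naturally lives on that ball, and the inductive statement must accommodate functions defined on balls rather than on all of $X$. The non-Archimedean structure is essential throughout: it makes every $B_X(\rho)$ an additive subgroup, so all expressions $z+\sum\epsilon_i h_i$ arising in iterated differences remain within the relevant domain, and it provides the sharp bound on $\|\alpha_\epsilon\|_X$ that makes the original restricted hypothesis invokable after Djokovi\'c's identity.
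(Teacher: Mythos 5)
Your argument is correct and follows essentially the same route as the paper's: Djokovi\'c's identity to pass to the symmetric multi-variable equation on the shrunken ball, the definition $A^m(h_1,\dots,h_m)=\frac{1}{m!}\Delta_{h_1\cdots h_m}g(0)$, subtraction of its diagonalization, iteration down through the degrees, and reduction of general $x_0$ to $x_0=0$ by translation. The only difference is organizational: the paper keeps every function defined on all of $X$ by extending $f_1=f-A_m$ by zero outside the relevant ball (and then checks $\Delta_h^m f_1(x)=0$ for exterior $x$ via $\|x+kh\|=\|x\|$), whereas you run the induction intrinsically on balls, which renders that extra case analysis unnecessary.
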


\begin{lemma} \label{L} Assume that $f:X\to Y$ satisfies \eqref{fund}
and let $$\phi(r,m)=r\left(\prod_{k=2}^{m+1} \max\{|1/t|:t=1,2,\cdots,k\}\right)^{-1}.$$ Then there exist a constant $A_0$ and  $k$-additive symmetric  maps $$A^{k}:B_X(\phi(r,m))\times\cdots ^{(k\text{ times})}\times B_X(\phi(r,m))\to Y$$ for $k=1,2,\cdots,m$, such that 
\[
f(z)=A_0+\sum_{k=1}^mA_{k}(z) \  \ \text{ for all } \ z\in B_X(\phi(r,m));
\]  
where $A_{k}(z)=A^{k}(z,z,\cdots,z)$ is the diagonalization of $A^{k}(z_1,\cdots,z_k)$, $k=1,\cdots,m$. 
\end{lemma}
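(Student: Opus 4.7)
The plan is to prove the lemma by induction on $m$, at each step using Djokovi\'{c}'s identity \eqref{igualdad} to convert the one-variable Fr\'{e}chet equation into its symmetric multivariable form (on a slightly smaller ball), and then subtracting off the top-degree piece to reduce to the inductive hypothesis.

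The base case $m=0$ is immediate: since $X$ is ultrametric, $B_X(r)=B_X(\phi(r,0))$ is an additive subgroup, so $\Delta_h f(x)=0$ for every $h\in B_X(r)$ forces $f\equiv f(0)$ on $B_X(r)$. For the inductive step, fix $m\ge 1$ and set $\rho:=r/\max\{|1/t|:1\le t\le m+1\}$. For any $h_1,\dots,h_{m+1}\in B_X(\rho)$, each $\alpha_{(\epsilon_1,\dots,\epsilon_{m+1})}=-\sum_{j=1}^{m+1}\epsilon_j h_j/j$ appearing in \eqref{igualdad} has norm at most $\max_t|1/t|\cdot\rho\le r$ by the ultrametric inequality, so the hypothesis \eqref{fund} annihilates each summand on the right-hand side of \eqref{igualdad}, yielding $\Delta_{h_1 h_2\cdots h_{m+1}}f(x)=0$ for all $x\in X$ and $h_i\in B_X(\rho)$.

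Next, I would define the top-degree polar form
\[
A^m(h_1,\dots,h_m):=\frac{1}{m!}\,\Delta_{h_1 h_2\cdots h_m}f(0),\qquad h_i\in B_X(\rho),
\]
legitimate because $Y$ is a $\mathbb{Q}$-vector space. Symmetry of $A^m$ is automatic, and $m$-additivity in each slot reduces to the vanishing of the $(m+1)$-fold mixed difference $\Delta_{h\,h'\,h_2\cdots h_m}f(0)$, which is supplied by the previous paragraph. Setting $A_m(z):=A^m(z,\dots,z)$ and $g:=f-A_m$ on $B_X(\rho)$, one verifies that $\Delta_h^m g(x)=0$ for all $x,h\in B_X(\rho)$: the crucial identity is $\Delta_h^m f(x)=\Delta_h^m f(0)$, obtained by writing $\Delta_h^m f(x)-\Delta_h^m f(0)=\Delta_{x\,h\,h\,\cdots\,h}f(0)$ (an $(m+1)$-fold mixed difference with $x$ as the extra free slot) and invoking the Fr\'{e}chet identity just proved; this combines with $\Delta_h^m A_m(x)=m!\,A_m(h)=\Delta_h^m f(0)$ (since $A_m$ is the diagonalization of an $m$-linear symmetric form) to give $\Delta_h^m g\equiv 0$ on $B_X(\rho)$.

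Applying the inductive hypothesis to $g$ on $B_X(\rho)$---that is, with $r$ replaced by $\rho$ and $m$ by $m-1$---produces a constant $A_0$ together with symmetric $k$-additive maps $A^k$ on $B_X(\phi(\rho,m-1))^k$ for $k=1,\dots,m-1$ satisfying $g(z)=A_0+\sum_{k=1}^{m-1}A_k(z)$ on $B_X(\phi(\rho,m-1))$. A direct computation shows the telescoping identity $\phi(\rho,m-1)=\phi(r,m)$, and adding back $A_m$ yields the desired decomposition $f(z)=A_0+\sum_{k=1}^m A_k(z)$ on $B_X(\phi(r,m))$. The main delicate point---and the source of the product formula defining $\phi(r,m)$---is precisely this radius tracking: each step of the induction contracts the admissible ball by an additional factor $\max\{|1/t|:1\le t\le m+1\}$, and the cumulative contraction from $k=2$ up to $k=m+1$ is exactly $\phi(r,m)/r$.
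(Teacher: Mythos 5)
Your argument follows the same route as the paper's: define the polar form $A^m(h_1,\dots,h_m)=\frac{1}{m!}\Delta_{h_1\cdots h_m}f(0)$, use Djokovi\'{c}'s identity to obtain the vanishing of the $(m+1)$-fold mixed differences on the contracted ball $B_X(\rho)$ (hence symmetry and $m$-additivity of $A^m$ there), subtract the diagonalization $A_m$, and recurse. The paper merely phrases the recursion as an iteration rather than a formal induction, and your telescoping identity $\phi(\rho,m-1)=\phi(r,m)$ is exactly the radius bookkeeping it performs.

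There is one step you should patch. You apply the inductive hypothesis --- which is the lemma itself, a statement about functions defined on all of $X$ satisfying $\Delta_h^{m}g(x)=0$ for \emph{every} $x\in X$ and $h\in B_X(\rho)$ --- to $g=f-A_m$, which you have defined only on $B_X(\rho)$ and for which you have verified the equation only for $x\in B_X(\rho)$. As written, the hypothesis of the statement you are inductively invoking is not met. Two standard fixes: either reformulate the induction so that it concerns functions defined on a ball (everything in the argument evaluates the function only at points of the ball, which is an additive subgroup, so this costs nothing); or do what the paper does, namely extend $g$ by $0$ outside $B_X(\rho)$ and check $\Delta_h^m g(x)=0$ also for $x\notin B_X(\rho)$, which follows from the ultrametric identity $\|x+kh\|=\|x\|$ whenever $\|x\|>\|h\|$, so that all the nodes $x+kh$ remain outside the ball and every term vanishes. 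With either fix your proof is complete and coincides in substance with the paper's.
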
 

\begin{proof}  Assume that $f:X\to Y$ satisfies $(\ref{fund})$, and consider the function $A^m(x_1,\cdots,x_m)=\frac{1}{m!}\Delta_{x_1x_2\cdots x_m}f(0)$. Then $A^m$ is symmetric since the operators $\Delta_{x_i}$, $\Delta_{x_j}$ commute. Furthermore, the identity $\Delta_{x+y}=\Delta_x+\Delta_y+\Delta_{xy}$ implies that 
\begin{eqnarray*}
&\ &A^m(x_1,\cdots,x_{k-1},x+y,x_{k+1},\cdots,x_m) \\
&\ & \ \ \ -A^m(x_1,\cdots,x_{k-1},x,x_{k+1},\cdots,x_m)-A^m(x_1,\cdots,x_{k-1},y,x_{k+1},\cdots,x_m)\\
&\ & = \frac{1}{m!}\left(\Delta_{x_1\cdots x_{k-1}x_{k+1}\cdots x_m}(\Delta_{x+y}-\Delta_x-\Delta_y)f(0) \right)\\
&\ & = \frac{1}{m!}\left(\Delta_{x_1 \cdots x_{k-1}x_{k+1}\cdots x_mxy}f(0) \right).
\end{eqnarray*}  
If we apply Djokovi\'{c}'s theorem to the operator $\Delta_{z_1z_2\cdots z_{m+1}}$, we conclude that, if $z_1,\cdots,z_{m+1}\in B_X(r/\max\{|1/t|:1\leq t\leq m+1\})$, then 
\[
\|\alpha_{(\epsilon_1,\cdots,\epsilon_{m+1})}(z_1,\cdots,z_{m+1})\|=\|(-1)\sum_{t=1}^{m+1}\frac{\epsilon_t}{t}z_t\|\leq \max\{|1/t|:1\leq t\leq m+1\}\max_{1\leq t\leq m+1}\|z_t\|\leq r,
\]
so that $\Delta_{z_1z_2\cdots z_{m+1}}f(x)=0$ for all $x\in X$. In particular, the application $A^m$ is $m$-additive on $B_{X}(r/\max\{|1/t|:1\leq t\leq m+1\})$ and, hence, on all its additive subgroups. In particular, it is $m$-additive on the balls $B_X(\rho)$ for all $\rho \leq r/\max\{|1/t|:1\leq t\leq m+1\}$. 

Let us define the function
 \[
f_1(x)=\left\{
\begin{array}{cccccc}
f(x)-A_m(x) &  & \text{if} &   x\in B_{X}(r/\max\{|1/t|:1\leq t\leq m+1\}) \\
0 &  & \text{if} & x\not\in B_{X}(r/\max\{|1/t|:1\leq t\leq m+1\}) \\
\end{array},
\right.
\]
where $A_m(x)=A^m(x,x,\cdots,x)$ is the diagonalization of $A^m$, and let us compute $\Delta_h^mf_1(x)$ for $x\in X$ and $h\in B_{X}(r/\max\{|1/t|:1\leq t\leq m+1\})$. We divide this computation into two steps:
\begin{itemize}
\item[Step 1:] Assume $x\in B_{X}(r/\max\{|1/t|:1\leq t\leq m+1\})$. 
\end{itemize}
In this case, $x+kh\in  B_{X}(r/\max\{|1/t|:1\leq t\leq m+1\})$ for all $h\in  B_{X}(r/\max\{|1/t|:1\leq t\leq m+1\})$ and all $k\in\mathbb{N}$, so that $\Delta_h^mf_1(x)=\Delta_h^mf(x)-\Delta_h^mA_m(x)$. We compute separately each summand of the second member of this identity. Obviously,
\[
0=\Delta_{hh\cdots hx}f(0)=\Delta^m_h\Delta_xf(0)=\Delta^m_hf(x)-\Delta^m_hf(0),
\] 
since $x,h\in B_{X}(r/\max\{|1/t|:1\leq t\leq m+1\})$. This means that $\Delta^m_hf(x)=\Delta^m_hf(0)=m!A^m(h,\cdots,h)$. On the other hand, a direct computation shows that 
$\Delta_h^mA_m(x)=m!A^m(h,\cdots,h)$, which proves that 
\[
\Delta_h^mf_1(x)=0 \text{ for all } x,h\in B_{X}(r/\max\{|1/t|:1\leq t\leq m+1\}).
\]
\begin{itemize}
\item[Step 2:] Assume $x\not\in B_{X}(r/\max\{|1/t|:1\leq t\leq m+1\})$.
\end{itemize}
Obviously, $\|x\|>\|h\|\geq \|kh\|$ for all $h\in B_{X}(r/\max\{|1/t|:1\leq t\leq m+1\})$ and $k\in\mathbb{N}$, so that 
$\|x+kh\|=\|x\|$ and $\{x+kh\}_{k=0}^m\subset X\setminus B_{X}(r/\max\{|1/t|:1\leq t\leq m+1\})$. Hence $\Delta_h^mf_1(x)=0 $ also in this case. 

Thus, we have proved that  
\begin{equation*} 
f(x)=f_1(x)+A_m(x) \ \ \text{ for all } x\in B_{X}(r/\max\{|1/t|:1\leq t\leq m+1\}),
\end{equation*}
and 
\begin{equation*} 
\Delta_h^{m}f_1(x)=0 \ \ \text{ for all } x\in X \text{ and } h\in B_{X}(r/\max\{|1/t|:1\leq t\leq m+1\}).
\end{equation*}
A repetition of the same arguments will show that $f_1(x)$ admits a decomposition $f_1(x)=f_2(x)+A_{m-1}(x)$ on the ball   
$ B_{X}(\frac{r}{\max\{|1/t|:1\leq t\leq m+1\}\max\{|1/t|:1\leq t\leq m\}})$, with $A_{m-1}$ the diagonalization of an $(m-1)$-additive symmetric function 
\[A^{m-1}:B_{X}(\frac{r}{\max\{|1/t|:1\leq t\leq m+1\}\max\{|1/t|:1\leq t\leq m\}})^{m-1}\to Y,
\]  
and $f_2$ satisfying 
\begin{equation*} 
\Delta_h^{m}f_2(x)=0 \ \ \text{ for all } x\in X \text{ and } h\in B_{X}(\frac{r}{\max\{|1/t|:1\leq t\leq m+1\}\max\{|1/t|:1\leq t\leq m\}}).
\end{equation*}
The iteration of this process leads to a decomposition 
\[
f(x)=f_{m}(x)+A_1(x)+A_2(x)+\cdots+A_m(x), \ \ \text{ for all } x\in B_X(\phi(r,m)), 
\]
with  $A_{k}(z)=A^{k}(z,z,\cdots,z)$ being the diagonalization of the 
$k$-additive symmetric  map 
$A^{k}:B_X(\phi(r,m))\times\cdots ^{(k\text{ times})}\times B_X(\phi(r,m))\to Y$, $k=1,2,\cdots,m$; and 
$\Delta^1_hf_m(x)=0$ for all $x\in X$ and $h\in   B_X(\phi(r,m))$. In particular, this last formula implies that, for $x\in  B_X(\phi(r,m))$, $f_m(x)=f_m(0)=A_0$ is a constant. 
 \end{proof} 

\begin{proof}[Proof of Theorem \ref{CL}] Let us define, for $x_0\in X$, the function $g(x)=f(x_0+x)$. Then $g=\tau_{x_0}(f)$, where $\tau_{x_0}(f)(x)=f(x_0+x)$ is a translation operator. Obviously, the operators $\tau_{x_0}$ and $\Delta_h$ commute, so that
\[
\Delta_h^{m+1}g(x)=\Delta_h^{m+1}\tau_{x_0}f(x)=\tau_{x_0}\Delta_h^{m+1}f(x)=0 \ \ (x\in X, h\in B_X(r)=\{x:\|x\|\leq r\}).
\]
Hence, we can use Lemma \ref{L} with $g$ to conclude that there exist a constant $A_{0,x_0}$ and  $k$-additive maps $A^{k,x_0}:B_X(\phi(r,m))\times\cdots^{(k \text{ times})}\times B_X(\phi(r,m))\to Y$, $k=1,2,\cdots,m$, such that 
\[
f(x_0+z)=A_{0,x_0}+\sum_{k=1}^mA_{k,x_0}(z) \  \ \text{ for all } \ z\in B_X(r);
\]  
where $A_{k,x_0}(z)=A^{k,x_0}(z,z,\cdots,z)$ is the diagonalization of $A^{k,x_0}(z_1,\cdots,z_k)$, $k=0,1,\cdots,m$.
\end{proof}


 \bibliographystyle{amsplain}


\bigskip

\footnotesize{J. M. Almira

Departamento de Matem\'{a}ticas. Universidad de Ja\'{e}n.

E.P.S. Linares,  C/Alfonso X el Sabio, 28

23700 Linares (Ja\'{e}n) Spain

Email: jmalmira@ujaen.es }

\vspace{1cm}

\footnotesize{Kh. F. Abu-Helaiel

Departamento de Estad\'{\i}stica e Investigaci\'{o}n Operativa. Universidad de Ja\'{e}n.

Campus de Las Lagunillas

23071 Ja\'{e}n, Spain

Email: kabu@ujaen.es
}

\end{document}